\newtheorem{theorem}{Theorem}[section]
\newtheorem{definition}[theorem]{Definition}
\newtheorem{lemma}[theorem]{Lemma}
\newtheorem{example}[theorem]{Example}
\newtheorem{remark}[theorem]{Remark}
\definecolor{pink}{rgb}{1, .75, .8}
\definecolor{lgrey}{gray}{.85}
\def\defineTColor#1#2{%
 \newpsstyle{#1}{%
  fillstyle=vlines,hatchcolor=#2,
  hatchwidth=0.1\pslinewidth,
  hatchsep=1\pslinewidth}%
  }
\newcommand{\sn}{\mathop{\delta}\limits^{\doublewedge}}
\newcommand{\sfar}{\mathop{\not{\delta}}\limits_{\mbox{\tiny$\doublevee$}}}
\newcommand{\CL}{\mbox{CL}}
\newcommand{\cl}{\mbox{cl}}
\newcommand{\Int}{\mbox{int}}
\begin{document}

\title[Hit and Miss Hypertopologies]{Strongly Hit and Far Miss  Hypertopology \&\\ Hit and Strongly Far Miss  Hypertopology}

\author[J.F. Peters]{J.F. Peters$^{\alpha}$}
\email{James.Peters3@umanitoba.ca, cguadagni@unisa.it}
\address{\llap{$^{\alpha}$\,}Computational Intelligence Laboratory,
University of Manitoba, WPG, MB, R3T 5V6, Canada and
Department of Mathematics, Faculty of Arts and Sciences, Ad\i yaman University, 02040 Ad\i yaman, Turkey}
\author[C. Guadagni]{C. Guadagni$^{\beta}$}
\address{\llap{$^{\beta}$\,}Computational Intelligence Laboratory,
University of Manitoba, WPG, MB, R3T 5V6, Canada and
Department of Mathematics, University of Salerno, via Giovanni Paolo II 132, 84084 Fisciano, Salerno , Italy}
\thanks{The research has been supported by the Natural Sciences \&
Engineering Research Council of Canada (NSERC) discovery grant 185986.}

\subjclass[2010]{Primary 54E05 (Proximity); Secondary 54B20 (Hyperspaces)}

\date{}

\dedicatory{Dedicated to the Memory of Som Naimpally}

\begin{abstract}
This article introduces the {\it strongly hit and far-miss $\tau^\doublewedge_\mathscr{B} $ as well as hit and strongly far miss $\tau_{\doublevee, \mathscr{B}}$ hypertopologies on $\CL(X)$ associated with} ${\mathscr{B}}$, a nonempty family of subsets on the topological space $X$. They result from the strong farness and strong nearness proximities. The main results in this paper stem from the Hausdorffness of $(\CL(X), \tau_{\doublevee, \mathscr{B}})$ and $(\mbox{RCL}(X), \tau^\doublewedge_\mathscr{B}  ) $, where $\mbox{RCL}(X)$ is the space of regular closed subsets of $X$. To obtain the results, special local families are introduced.
\end{abstract}

\keywords{Hit-and-Miss Topology, Hyperspaces, Proximity, Strongly Far, Strongly Near, Regular closed sets, Regular Open Sets, Hausdorff, $T_2$}

\maketitle

\section{Introduction}

This article introduces some new hypertopologies on the space $\CL(X)$ of non-empty closed subsets and on $\mbox{RCL}(X)$, the space of regular closed subsets of a topological space $X$.  These new hypertopologies result from the introduction of strong farness~\cite{PetersGuadagni2015stronglyFar} and strong nearness~\cite{PetersGuadagni2015stronglyNear}.  Such hypertopologies are located in the class of hit and miss ones. Significant examples of such topologies are Hausdorff, Fell and Attouch-Wets hypertopologies.  Interest in this topic spans many years (see, {\em e.g.}, ~\cite{Beer1993,Beer1993hit,DiConcilio2013action,DiConcilio2000SetOpen,DiConcilio2000PartialMaps,DiConcilio1989,DiMaio2008hypertop,DiMaio1995hypertop,DiMaio1992hypertop,Lucchetti1994,Lucchetti1995,Som2006hypertopology}) with a number of possible applications.  

The \emph{strongly near} proximity~\cite{PetersGuadagni2015stronglyNear} and \emph{strongly far} proximity~\cite{PetersGuadagni2015stronglyFar} and~\cite{Peters2015visibility}, provide a foundation for the hypertopologies introduced, here. Strong nearness plays a role in the new hit sets, while strong farness leads to some new miss sets.  

\section{Preliminaries}
In this work we focus our attention on two new kinds of hit and far-miss topologies. On the one hand, we use the concept of \textit{strong farness} \cite{PetersGuadagni2015stronglyFar} and on the other hypertopology is based on \emph{strongly near proximity}~\cite{PetersGuadagni2015stronglyNear}.

Strong proximities are associated with Lodato proximity and the Efremovi\v c property. 
Recall how a \textit{Lodato proximity} is defined~\cite{Lodato1962,Lodato1964,Lodato1966} (see, also, \cite{Naimpally2009,Naimpally1970,Guadagni2015}).

\begin{definition} 
Let $X$ be a nonempty set. A \textit{Lodato proximity $\delta$} is a relation on $\mathscr{P}(X)$, which satisfies the following properties for all subsets $A, B, C $ of $X$:
\begin{itemize}
\item[P0)] $A\ \delta\ B \Rightarrow B\ \delta\ A$
\item[P1)] $A\ \delta\ B \Rightarrow A \neq \emptyset $ and $B \neq \emptyset $
\item[P2)] $A \cap B \neq \emptyset \Rightarrow  A\ \delta\ B$
\item[P3)] $A\ \delta\ (B \cup C) \Leftrightarrow A\ \delta\ B $ or $A\ \delta\ C$
\item[P4)] $A\ \delta\ B$ and $\{b\}\ \delta\ C$ for each $b \in B \ \Rightarrow A\ \delta\ C$
\end{itemize}
Further $\delta$ is \textit{separated }, if 
\begin{itemize}
\item[P5)] $\{x\}\ \delta\ \{y\} \Rightarrow x = y$.
\end{itemize}
\end{definition}

\noindent $A\ \delta\ B$ reads "$A$ is near to $B$" and $A \not \delta B$ reads "$A$ is far from $B$".
A \emph{basic proximity} is one that satisfies the \v{C}ech axioms $P0)-P3)$~\cite[\S 2.5, p. 439]{Cech1966}.
\textit{Lodato proximity} or \textit{LO-proximity} is one of the simplest proximities. We can associate a topology with the space $(X, \delta)$ by considering as closed sets those sets that coincide with their own closure where, for a subset $A$, we have
\[
\mbox{cl} A = \{ x \in X: x\ \delta\ A\}.
\]
This is possible because of the correspondence of Lodato axioms with the well-known Kuratowski closure axioms. 

By considering the gap between two sets in a metric space ( $d(A,B) = \inf \{d(a,b): a \in A, b \in B\}$ or $\infty$ if $A$ or $B$ is empty ), Efremovi\v c introduced a stronger proximity called \textit{Efremovi\v c proximity} or \textit{EF-proximity}~\cite{Efremovich1951,Efremovich1952}.  

\begin{definition}
An \emph{EF-proximity} is a relation on $\mathscr{P}(X)$ which satisfies $P0)$ through $P3)$ and in addition 
\[A \not\delta B \Rightarrow \exists E \subset X \hbox{ such that } A \not\delta E \hbox{ and } X\setminus E \not\delta B \hbox{ EF-property.}\]
\end{definition}

A topological space has a compatible EF-proximity if and only if it is a Tychonoff space.

\setlength{\intextsep}{0pt}
\begin{wrapfigure}[10]{R}{0.45\textwidth}
\begin{minipage}{4.5 cm}
\begin{center}
\begin{pspicture}
 (0.0,3.5)(2.5,4.0)
\psframe[linecolor=black](-0.8,0.5)(4.5,4.0)
\pscircle[linestyle=dotted,dotsep=0.05,linecolor=black,linewidth=0.05,style=Torange](0.18,1.55){0.88}
\pscircle[linestyle=dotted,dotsep=0.05,linecolor=black,linewidth=0.05,style=Tyellow](0.98,1.35){0.78}
\pscircle[linecolor=black,linestyle=solid,linewidth=0.05,style=Tgray](3.38,1.85){1.00}
\pscircle[linestyle=dotted,dotsep=0.05,linecolor=black,linewidth=0.05,style=Tyellow](2.38,2.55){0.70}\pscircle[linestyle=solid,dotsep=0.05,linecolor=black,linewidth=0.05,style=Torange](3.52,3.05){0.50}
\rput(-0.5,3.8){\footnotesize  $\boldsymbol{X}$}
\rput(1.40,1.35){\footnotesize $\boldsymbol{\Int A}$}
\rput(0.08,2.15){\footnotesize $\boldsymbol{\Int B}$}
\rput(3.68,1.65){\footnotesize $\boldsymbol{E}$}
\rput(3.42,3.05){\footnotesize $\boldsymbol{D}$}
\rput(2.08,2.55){\footnotesize  $\boldsymbol{C}$}
\rput(0.28,0.15){\qquad\qquad\qquad\qquad\footnotesize 
                 $\boldsymbol{\mbox{Fig.} 2.1.\  A\ \sn\ B,\  E\ \sn\ (C \cup D)}$}\label{fig:stronglyNear}
 \end{pspicture}
\end{center}
\end{minipage}
\end{wrapfigure}
\setlength{\intextsep}{2pt}

Any proximity $\delta$ on $X$ induces a binary relation over the powerset exp $X,$ usually denoted as $ \  \ll_\delta $ and  named    the  {\it   natural strong inclusion associated with } $\delta,$ by declaring that $ A$ is {\it strongly included} in $B, \ A \ll_{\delta} B, \ $ when $A$ is far from the complement of $B,  \ A \not\delta X \setminus B .$

By strong inclusion the \textit{Efremivi\v c property} for $ \delta $ can be written also as a betweenness property   \

 \centerline {  (EF) \  \   \  \  If $A \ll_{\delta} B,$  then there exists some $C$ such that $A \ll_{\delta} \  C \ll_{\delta} \ B$.} \  \ \  

We say that $A$ and $B$ are \textbf{\emph{$\delta-$strongly far}}~\cite{PetersGuadagni2015stronglyNear}, where $\delta$ is a Lodato proximity, and we write $\mathop{\not{\delta}}\limits_{\mbox{\tiny$\doublevee$}}$ if and only if $A \not\delta B$ and there exists a subset $C$ of $X$ such that $A \not\delta X \setminus C$ and $C \not\delta B$, that is the Efremovi\v c property holds on $A$ and $B$.

Instead, the concept of \textbf{\emph{strongly near}} proximity arises from the need to introduce a relation yields information about the interiors of pairs of subsets that at least have non-empty intersection.
We say that the relation $\sn$ on $\mathscr{P}(X)$ is an \emph{strongly near proximity}, provided it satisfies the following axioms.  Let $A, B, C \subset X$ and $x \in X$.
\begin{itemize}
\item[N0)] $\emptyset \not\sn A, \forall A \subset X $, and \ $X \sn A, \forall A \subset X$
\item[N1)] $A \sn B \Leftrightarrow B \sn A$
\item[N2)] $A \sn B \Rightarrow A \cap B \neq \emptyset$
\item[N3)] If $\Int(B)$ and $\Int(C)$ are not equal to the empty set, $A\ \sn\ B$ or $A\ \sn\ C \ \Rightarrow \ A \sn (B \cup C)$
\item[N4)] $\mbox{int}A \cap \mbox{int} B \neq \emptyset \Rightarrow A\ \sn\ B$
\end{itemize}

\begin{example} {\bf Intersecting Interiors}.\\
In Fig. 2.1, $A \sn B$ (axiom N4) and $E\ \sn\ (C \cup D)$  (axiom N3).
\qquad \textcolor{blue}{$\blacksquare$}
\end{example}

When we write $A \sn B$, we read $A$ is \emph{strongly near} to $B$~\cite{Peters2015visibility,Peters2015stronglyNear} (see, also,~\cite{PetersGuadagni2015stronglyNear}).
For each \emph{almost proximity} we assume the following relations:
\begin{itemize}
\item[N5)] $x \in \Int (A) \Rightarrow \{x\} \sn A$
\item[N6)] $\{x\} \sn \{y\} \Leftrightarrow x=y$  \qquad \textcolor{blue}{$\blacksquare$}
\end{itemize}
If we take the almost proximity related to non-empty intersection of interiors, we have that $A \sn B \Leftrightarrow \Int A \cap \Int B \neq \emptyset$ provided $A$ and $B$ are not singletons; if $A = \{x\}$, then $x \in \Int(B)$, and if $B$ too is a singleton, then $x=y$. If $A \subset X$ is an open set, then each $x\in A$ is strongly near $A$.

\section{New hypertopologies}
Let $\CL(X)$ be  the hyperspace of all non-empty closed subsets of  a space  $X.$
 {\it Hit and miss} and {\it hit and far-miss}  topologies on $\CL(X)$  are obtained by the join of two halves. Well-known examples are Vietoris topology~\cite{Vietoris1921,Vietoris1922,Vietoris1923,Vietoris1927} (see, also,~\cite{Attouch1991,Beer1993,Beer1993hit,DiConcilio2013action,DiConcilio2000SetOpen,DiConcilio2000PartialMaps,DiConcilio1989,DiMaio2008hypertop,DiMaio1995hypertop,DiMaio1992hypertop,Som2006hypertopology,Guadagni2015}) and Fell topology~\cite{Fell1962HausdorfTop}.
In \cite{PetersGuadagni2015stronglyFar} and \cite{PetersGuadagni2015stronglyNear}, the following new hypertopologies are introduced.\\
\vspace{2mm}

\noindent $\clubsuit$ \ \ $\tau_\doublevee$ is the \emph{hit and strongly far-miss} topology having as subbase the sets of the form:
\begin{itemize}
\item $V^- = \{E \in {\rm \mbox{CL}}(X): E \cap V \neq \emptyset\}$, where $V$ is an open subset of $X$,
\item $A_\doublevee =  \{ \ E \in \mbox{CL}(X) : E \stackrel{\not{\text{\normalsize$\delta$}}}{\text{\tiny$\doublevee$}} X\setminus  A  \ \}$, where $A$ is an open subset of $X$
\end{itemize}
\medskip
\noindent $\clubsuit$ \ \ $\tau^\doublewedge$ is the \emph{strongly hit and far-miss} topology having as subbase the sets of the form:
\begin{itemize}
\item $V^{\doublewedge} = \{E \in \CL(X): E \sn V \}$, where $V$ is an open subset of $X$,
\item $A^{++} =  \{ \ E \in \CL(X) : E \not\delta X\setminus  A  \ \}$, where $A$ is an open subset of $X$,
\end{itemize}

\noindent where in both cases $\delta$ is a Lodato proximity compatible with the topology on $X$.

It is possible to consider several generalizations. For example, for the miss part we can look at subsets running in a family of closed sets $\mathscr{B}$. So we define the {\it strongly hit and far-miss topology on $\CL(X)$ associated with} ${\mathscr{B}}$ as the topology generated by the join of the hit sets $ A^{\doublewedge},$  where  $A$ runs over all  open subsets of $X$, with the miss sets $A^{++}$, where   $A$ is once again an open subset of $X,$ but more,  whose  complement runs in   $\mathscr{B}$. We can do the same with the \textit{hit and strongly far-miss} topology.

\section{Main Results}
Next, consider \emph{ strongly hit and far miss topologies } and \emph{hit and strongly far miss topologies} associated with families of subsets. We look at conditions that make these topologies $T_2$ topologies. 

\subsection{Hit and strongly far miss topologies} $\mbox{}$\\
Here, we consider results contained in \cite{Beer1993}, {\em e.g.}, $\tau_{\doublevee, \mathscr{B}}$ is the topology having as subbase the sets of the form:

\begin{itemize}
\item $V^- = \{E \in {\rm \mbox{CL}}(X): E \cap V \neq \emptyset\}$, where $V$ is an open subset of $X$,
\item $A_\doublevee =  \{ \ E \in \mbox{CL}(X) : E \stackrel{\not{\text{\normalsize$\delta$}}}{\text{\tiny$\doublevee$}} X\setminus  A  \ \}$, where $A$ is an open subset of $X$ and $X \setminus A \in \mathscr{B}$
\end{itemize}

\begin{definition}
Let $X$ be a topological space and $\mathscr{B}$ a non-empty family of subsets of $X$. We call $\mathscr{B}$ a \emph{strongly local family} if and only if 
\[\forall x \in X \hbox{ and } \forall U \hbox{ nbhd of }x, \ \exists S \in \mathscr{B} : x \in \Int(S) \subseteq S \ll_\doublevee U,\]
where $ S \ll_\doublevee U$ means $S \sfar X \setminus U$.  \qquad \textcolor{blue}{$\blacksquare$}
\end{definition}

We write $\Sigma(\mathscr{B})$ for the collection of all finite unions of elements of $\mathscr{B}$.

\begin{figure}[!ht]
\begin{center}
\begin{pspicture}
 (1.5,0.0)(2.5,4.5)
\psframe[linecolor=black](-0.8,0.5)(3.5,4.0)
\pscircle[linecolor=black,linestyle=dotted,linewidth=0.05,style=Tgray](1.38,2.20){1.65}
\pscircle[linestyle=solid,dotsep=0.05,linecolor=black,linewidth=0.05,style=Tyellow](1.38,2.20){0.82}
\rput(-0.5,3.8){\footnotesize  $\boldsymbol{X}$}
\rput(1.45,2.20){\footnotesize $\boldsymbol{\bullet x}$}
\rput(1.38,3.55){\footnotesize $\boldsymbol{U_x}$}
\rput(1.38,2.85){\footnotesize  $\boldsymbol{A}$}
\rput(0.28,0.15){\qquad\qquad\qquad\qquad\footnotesize 
                 $\boldsymbol{\mbox{Fig.}\ 3.1\  A \ll_{\delta} U_x}$}
 \end{pspicture}
\end{center}
\end{figure}

\begin{example} {\bf Strongly Local Family 1}.\\
A first simple example of strongly local family is the following. Take $X$ as a locally compact topological space, $\mathscr{B}$ the family of compact subsets and $\delta$ as the \emph{Alexandroff proximity} defined by \ $A\ \delta_A\ B \Leftrightarrow \mbox{cl}A \cap \mbox{cl}B \neq \emptyset$ or both $\mbox{cl}A$ and $\mbox{cl}B$ are non-compact. In fact, in this case, for each $x \in X$ and each nbhd $U$ of $x$, it is always possible to find a compact subset containing $x$ and strongly contained in $U$.  See, {\em e.g.}, Fig. 3.1.
\qquad \textcolor{blue}{$\blacksquare$}
\end{example}

\begin{figure}[!ht]
\begin{center}
\begin{pspicture}
 (0.0,0.0)(2.5,4.5)
\psframe[linecolor=black](-1.2,0.3)(4.8,4.55)
\pscircle[linecolor=black,linestyle=solid,dotsep=0.05,linewidth=0.05](1.52,2.55){1.91}
\pscircle[linestyle=solid,dotsep=0.05,linecolor=black,linewidth=0.05,style=Tgreen](1.52,2.55){0.38}
\pswedge[linestyle=dotted,dotsep=0.05,style=Tgray](0.52,1.85){1.8}{10}{60}
\psellipse[linecolor=black,linestyle=dotted,dotsep=0.05,linewidth=0.05,style=Tgray](1.52,2.55)(0.45,1.80)
\psellipse[linecolor=black,linestyle=dotted,dotsep=0.05,linewidth=0.05,style=Tgray](1.52,2.55)(1.55,1.25)
\rput(-0.8,4.3){\footnotesize  $\boldsymbol{X}$}
\rput(1.52,2.55){\tiny $\boldsymbol{A}$}
\rput(0.52,3.85){\footnotesize $\boldsymbol{C}$}
\rput(0.32,2.85){\footnotesize $\boldsymbol{B_1}$}
\rput(1.52,1.05){\tiny $\boldsymbol{B_2}$}
\rput(0.82,2.05){\tiny $\boldsymbol{B_3}$}
\rput(0.28,0.00){\qquad\qquad\qquad\qquad\footnotesize 
                 $\boldsymbol{\mbox{Fig.}\ 3.2.\  A \ll_{\delta} B_i \ll_{\delta}  C}$}
 \end{pspicture}
\end{center}
\end{figure}

\begin{example} {\bf Strongly Local Family 2}.\\
A more general example is given by local proximity spaces $(X, \delta, \mathscr{B})$ \cite{Leader1967}, where $\delta$ is a proximity generally non-Efremovi\v c and $\mathscr{B}$ is a boundedness. In particular this family satisfies the following property:  if $A \in {\mathscr{B}}, \  C \subset X$ and $A \ll_{\delta} C$  then there exists some $B \in {\mathscr{B}}$ such that $A \ll_{\delta} B \ll_{\delta}  C$, where $\ll_{\delta}$ is the  natural strong inclusion  associated with $\delta.$ By this property follows that $\mathscr{B}$ is a strongly local family.  See, {\em e.g.}, Fig. 3.2.
\qquad \textcolor{blue}{$\blacksquare$}
\end{example}

\begin{theorem}\label{T2far}
Let $(X, \tau)$ be a topological space, $\delta$ a compatible Lodato proximity on $X$ and $\mathscr{B}$ a non-empty subfamily of $\CL(X)$ . Suppose that any point that does not belong to a closed set is strongly far from the closed set. $(\CL(X), \tau_{\doublevee, \mathscr{B}}  ) $ is $T_2$ if and only if $\Sigma(\mathscr{B})$ is a strongly local family.
\end{theorem}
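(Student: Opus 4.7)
The plan is to prove the two directions of the biconditional separately: I use the strongly local family to build disjoint hypertopology neighborhoods in one direction, and use a carefully chosen $T_2$-separation to extract a strongly local family element in the other.

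For the sufficiency direction, I would take distinct closed sets $E_1 \neq E_2$ and, by symmetry, pick $x \in E_1 \setminus E_2$, so that $U := X \setminus E_2$ is an open neighborhood of $x$. Strong locality of $\Sigma(\mathscr{B})$ supplies $S = \bigcup_{i=1}^n B_i \in \Sigma(\mathscr{B})$ (with $B_i \in \mathscr{B}$) satisfying $x \in \Int(S)$ and $S \sfar E_2$; fix $i_0$ with $x \in B_{i_0}$. The proposed separating open sets in $\tau_{\doublevee, \mathscr{B}}$ are $\mathcal{O}_1 := (\Int(S))^-$ and $\mathcal{O}_2 := \bigcap_{i=1}^n (X \setminus B_i)_\doublevee$. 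Membership $E_1 \in \mathcal{O}_1$ is clear from $x \in E_1 \cap \Int(S)$, and for $E_2 \in \mathcal{O}_2$ the witness for $S \sfar E_2$ also witnesses each $B_i \sfar E_2$ (since $B_i \subseteq S$ and $\not\delta$ is monotone). Any $E$ in both neighborhoods would meet $\Int(S) \subseteq \bigcup_i B_i$, forcing $E \delta B_j$ for some $j$ and contradicting $E \sfar B_j$, so the two neighborhoods are disjoint.

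For the necessity direction, I fix $x$, an open neighborhood $U$, and set $F := X \setminus U$ (the case $F = \emptyset$ is trivial). I will apply $T_2$ to the distinct closed sets $F$ and $F \cup \{x\}$ to obtain disjoint basic open neighborhoods $\mathcal{O}_F = \bigcap_i V_i^- \cap \bigcap_l (A_l)_\doublevee$ and $\mathcal{O}_{Fx}$, where $F \sfar B_l := X \setminus A_l \in \mathscr{B}$. The candidate is $S := \bigcup_l B_l \in \Sigma(\mathscr{B})$. Setting $C := \bigcap_l C_l$ with $C_l$ a witness of $F \sfar B_l$ produces a common witness of $S \sfar F$, so $S \ll_\doublevee U$. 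Since $F \cup \{x\} \notin \mathcal{O}_F$ and no hit term can exclude $F \cup \{x\}$ (it still meets every $V_i$), some miss term must fail: $(F \cup \{x\}) \not\sfar B_l$. The decomposition $(F \cup \{x\}) \sfar B_l \Leftrightarrow F \sfar B_l \wedge \{x\} \sfar B_l$, together with the standing hypothesis, then forces $x \in B_l \subseteq S$.

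The main obstacle will be promoting $x \in S$ to $x \in \Int(S)$. I would argue by contradiction: if $x \notin \Int(S)$, every open neighborhood of $x$ meets $\bigcap_l A_l = X \setminus S$. From $F \notin \mathcal{O}_{Fx}$, the dual analysis forces a failing hit term of $\mathcal{O}_{Fx}$, yielding an open $V' \subseteq U$ with $x \in V'$; intersecting $V'$ with the remaining open pieces of the two basic neighborhoods produces an open neighborhood $W$ of $x$. Picking $y \in W \cap (X \setminus S)$ and a finite $F_{\mathrm{sub}} \subseteq F$ meeting every relevant hit term, with each $f_i \in V_i' \cap F$ chosen outside the miss-term closed sets of $\mathcal{O}_{Fx}$ (possible because $F \sfar B_{l'}'$ gives $F \cap B_{l'}' = \emptyset$ and the standing hypothesis reduces the singleton strong-farness requirements on $y$ and the $f_i$ to plain point-membership conditions), the closed set $E := F_{\mathrm{sub}} \cup \{y\}$ would lie simultaneously in $\mathcal{O}_F$ and $\mathcal{O}_{Fx}$, contradicting their disjointness.
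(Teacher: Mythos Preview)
Your proposal is correct and follows essentially the same strategy as the paper: for sufficiency you separate via $(\Int S)^-$ and the miss sets coming from the pieces of $S$, and for necessity you apply the $T_2$ property to $F$ and $F\cup\{x\}$ and read off $S$ from the miss part of the neighborhood of $F$. The paper's execution of the necessity direction is somewhat more direct: rather than first showing $x\in S$ and then arguing $x\in\Int(S)$ by contradiction using a finite set $F_{\mathrm{sub}}\cup\{y\}$, it shows in one stroke that some hit-term $W_{l}$ of the neighborhood of $F\cup\{x\}$ (with $x\in W_l$) must be contained in $S$, using the full set $A\cup\{x_{l_1},\dots,x_{l_k}\}$ as the contradicting element; this immediately yields $x\in W_l\subseteq\Int(S)$ without the auxiliary neighborhood $W$ or the finite selection $F_{\mathrm{sub}}$. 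Your sketch would benefit from specifying $W$ precisely (it should be $\bigcap_{j:\,x\in W_j}W_j\cap(X\setminus T)$, not all ``open pieces'' of both neighborhoods, since $x$ may lie in some $B_l$) and from clarifying the undefined symbol $V_i'$, but the underlying argument is sound.
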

\begin{proof}
$"\Leftarrow"$.We want to prove that $(\CL(X), \tau_{\doublevee, \mathscr{B}}  ) $ is $T_2$. Suppose that $A$ and $B$ are distinct closed subsets of $X$. So there exists a point $b \in B \cap (X \setminus A)$ and by the assumption there exists a subset $S \in \mathscr{B}$ such that $x \in \Int(S) \subset S \ll_{\doublevee} X \setminus A$. Then $B \in (\Int S)^-$ and $A \in (X\setminus S)_\doublevee$ with $(\Int S)^- \cap (X\setminus S)_\doublevee = \emptyset$.\\
$"\Rightarrow"$. Consider now $x \in X$ and suppose $X \setminus A$ to be a nbhd of $x$. We have to prove that the special betweenness property holds. Take $A$ and $A \cup \{x\}$. By the hypothesis we know that there exist two open sets  $\mathscr{A}_1, \mathscr{A}_2 \in \tau_{\doublevee, \mathscr{B}}$ such that $A \in \mathscr{A}_1, \ A\cup \{x\} \in \mathscr{A}_2$ and $\mathscr{A}_1 \cap \mathscr{A}_2= \emptyset$. Suppose
\begin{center}
$ \mathscr{A}_1= (X \setminus S)_\doublevee \cap (\bigcap_{i=1}^n V_i^-)$\\
 $\mathscr{A}_2= (X \setminus T)_\doublevee \cap (\bigcap_{j=1}^m W_j^-)$\\
\end{center}
 with $S , \ T \in \mathscr{B}$ and $V_1,...,V_n$, $W_1,..., W_m$ open subsets of $X$. Further we may assume that $S \cap V_i= \emptyset,\ \forall i=1,...,n$ and $T \cap W_j = \emptyset, \ \forall j=1,...,m.$

 Consider now $\{l_1,..., l_k\}$ indices for which $x \in W_l$. We want to show that for at least one of these indices $W_l \subseteq S, \ l \in \{l_1,..., l_k \} $. In fact if, by contradiction, $W_l \not\subseteq S \ \forall l \in \{l_1,..., l_k\}$, then there would exist $x_l \in W_l \cap (X \setminus S) \ \forall l \in \{l_1,..., l_k\}.$ But now $A \cup \{x_{l_1},..., x_{l_k}\} \in \mathscr{A}_1 \cap \mathscr{A}_2$, and this is absurd. Hence there exists $W_{l_{\overline{j}}} \subseteq S$ and we have $x \in W_{l_{\overline{j}}} \subseteq \Int(S) \subset S \ll_\doublevee X \setminus A$.
\end{proof}

\setlength{\intextsep}{0pt}
\begin{wrapfigure}[14]{R}{0.45\textwidth}
\begin{minipage}{4.5 cm}
\begin{center}
\psset{viewpoint=20 10 40 rtp2xyz,lightsrc=viewpoint,Decran=28,ngrid=36 36,grid=false}
\begin{pspicture}(-2,2)(2,2)
\psSolid[object=sphere,r=1.45,lightintensity=1.25]
\psSolid[object=sphere,r=0.6,lightintensity=1.0]
\rput(-0.2,0){\tiny $\boldsymbol{p}$}
\rput(0.2,0.5){\footnotesize $\boldsymbol{C}$}
\rput(0.2,1.8){\footnotesize $\boldsymbol{U}$}
\axesIIID[showOrigin=true](2,2,2)(2.1,2.1,2.1)
\end{pspicture}
\rput(-3.5,-5){\qquad\qquad\qquad\qquad\footnotesize 
                 $\boldsymbol{\mbox{Fig.}\ 3.3.\ p\ \in\ \Int(C) \subseteq C \subset U}$}\label{fig:convex}
\end{center}
\end{minipage}
\end{wrapfigure}
\setlength{\intextsep}{2pt}

\subsection{Strongly hit and far miss topologies} $\mbox{}$\\
In this case, we want consider the family of \emph{regular closed subsets of $X$}, $\mbox{RCL}(X)$. Recall that a set $F$ is \emph{regular closed} if $F = \cl{(\Int F)}$, that is $F$ coincides with the closure of its interior. Considering the nature of almost proximities, this family seem to be the most suitable to which refer. A well-known fact is that regular closed sets form a complete Boolean lattice \cite{Ronse1990}. Moreover there is a one-to-one correspondence between regular open ($\mbox{RO}(X)$) and regular closed sets. We have a \emph{regular open} set $A$ when $A = \Int{(\cl A)}$, that is $A$ is the interior of its closure. The correspondence between the two mentioned classes is given by $c: \mbox{RO}(X) \rightarrow \mbox{RCL}(X)$, where $c(A)= \cl(A)$, and $o: \mbox{RCL}(X) \rightarrow \mbox{RO}(X)$, where $o(F) = \Int (F)$. By this correspondence it is possible to prove that also the family of regular open sets is a complete Boolean lattice. Furthermore it is shown that every complete Boolean lattice is isomorphic to the complete lattice of regular open sets in a suitable topology.\\
The importance of these families is also due to the possibility of using them for digital images processing, because they allow to satisfy certain common-sense physical requirements.\\

Consider now $\tau^\doublewedge_\mathscr{B}$, the \emph{strongly hit and far-miss} topology associated to $\mathscr{B}$:

\begin{itemize}
\item $V^{\doublewedge} = \{E \in \mbox{RCL}(X): E \sn V \}$, where $V$ is a regular open subset of $X$,
\item $A^{++} =  \{ \ E \in \mbox{RCL}(X) : E \not\delta X\setminus  A  \ \}$, where $A$ is a regular open subset of $X$ and $X \setminus A \in \mathscr{B}$.
\end{itemize}

It is easy to prove that this family generates a topology on $\mbox{RCL}(X)$.

\begin{lemma}\label{hitregular}
Let $(X, \tau)$ be a topological space and $\delta^\doublewedge$ strongly near proximity on $X$. For each regular open set $V$on $\mbox{RCL}(X)$, we have that $V^{\doublewedge} = \{E \in \mbox{RCL}(X): E \cap V \neq \emptyset \}$.
\end{lemma}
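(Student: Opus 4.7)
The plan is to prove the two inclusions separately, with the nontrivial content lying in the ``$\supseteq$'' direction, where one must upgrade a point of set-theoretic intersection to a point of intersection of interiors so that axiom N4 can be applied.

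For the inclusion $V^{\doublewedge} \subseteq \{E \in \mbox{RCL}(X): E \cap V \neq \emptyset\}$, I would simply invoke axiom N2 of the strongly near proximity: if $E \sn V$, then $E \cap V \neq \emptyset$. No regularity hypothesis is needed here.

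For the reverse inclusion, suppose $E \in \mbox{RCL}(X)$, $V$ is regular open, and $E \cap V \neq \emptyset$. The goal is to produce a point in $\Int(E) \cap \Int(V)$, since axiom N4 will then give $E \sn V$. Because $V$ is regular open, $\Int(V) = V$. Pick any $x \in E \cap V$. Since $E$ is regular closed, $E = \cl(\Int E)$, so $x \in \cl(\Int E)$. As $V$ is an open neighborhood of $x$, every neighborhood of $x$ meets $\Int(E)$; in particular $V \cap \Int(E) \neq \emptyset$. Thus $\Int(E) \cap \Int(V) \neq \emptyset$, and N4 yields $E \sn V$, i.e.\ $E \in V^{\doublewedge}$.

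The only subtle step is the second one, and it is precisely where both regularity assumptions are essential: without $V = \Int V$ (regular openness) one could have $V$ meet $E$ only in its boundary, and without $E = \cl(\Int E)$ (regular closedness) a point of $E \cap V$ need not be approachable from $\Int E$. Together they force the intersection $E \cap V$ to already witness an intersection of interiors, which is the whole content of the lemma.
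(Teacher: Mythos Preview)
Your proof is correct and follows essentially the same route as the paper's own argument: both invoke axiom~N2 for the inclusion $V^{\doublewedge}\subseteq\{E:E\cap V\neq\emptyset\}$, and for the reverse inclusion both exploit $E=\cl(\Int E)$ together with the openness of $V$ to upgrade $E\cap V\neq\emptyset$ to $\Int(E)\cap\Int(V)\neq\emptyset$, after which axiom~N4 applies. The only cosmetic differences are that the paper argues the nontrivial direction by contrapositive and phrases the closure step via nets rather than the neighborhood characterization you use.
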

\begin{proof}
For each regular open set $V$ in $X$, it is straightforward that $V^\doublewedge \subseteq \{E \in \mbox{RCL}(X): E \cap V \neq \emptyset \}$. Look now at the other inclusion and suppose that $A \not\sn V$.  Hence, by property $(N4)$, we have that $\Int A \cap V = \emptyset$. But for $\mbox{RCL}(X)$, it follows that $A \cap V = \emptyset$. In fact, if this intersection is not empty, we can find an element $a \in A = \cl(\Int A)$ such that it is approximated by a net of elements in $\Int A$, $\{a_\lambda\}_{\lambda \in \Lambda}$. Now, since $V$ is a nhbd of $a$, $V$ must contain $\{a_\lambda\}_{\lambda \in \Lambda}$ residually. But this is absurd. 
\end{proof}

\begin{definition}
Let $X$ be a topological space and $\mathscr{B}$ a non-empty family of subsets of $X$. We call $\mathscr{B}$ a \emph{regular local family } if and only if 
\[\forall x \in X \hbox{ and } \forall U \hbox{regular open set containing }x, \ \exists S \in \mathscr{B} : x \in \Int(S) \subseteq S \ll_\delta U,\]
where $ S \ll_\delta U$ means $S \not\delta X \setminus U$.
\end{definition}

\begin{theorem}
Let $(X, \tau)$ be a topological space, $\sn$ an almost proximity on $X$, $\delta$ a compatible Lodato proximity on $X$ and $\mathscr{B}$ a non-empty subfamily of $\mbox{RCL}(X)$ . If $\Sigma(\mathscr{B})$ is a regular local family, then $(\mbox{RCL}(X), \tau^\doublewedge_\mathscr{B}  ) $ is $T_2$.
\end{theorem}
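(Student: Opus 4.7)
The plan is to mirror the ``$\Leftarrow$'' direction of Theorem~\ref{T2far}, exploiting the regular local family hypothesis on $\Sigma(\mathscr{B})$ to produce disjoint basic $\tau^\doublewedge_\mathscr{B}$-open sets separating two distinct regular closed sets $A$ and $B$.

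First, given distinct $A,B\in\mbox{RCL}(X)$, I locate a point in the interior of one and outside the other. By symmetry, assume $B\setminus A\neq\emptyset$ and pick $b\in B\setminus A$. Because $A$ is regular closed, $X\setminus A$ is a \emph{regular} open neighbourhood of $b$; because $B=\cl(\Int B)$, this open set must meet $\Int B$, yielding a point $x\in\Int B\cap(X\setminus A)$. Applying the regular local family property of $\Sigma(\mathscr{B})$ to $x$ and the regular open set $X\setminus A$ produces $S\in\Sigma(\mathscr{B})$ with $x\in\Int(S)$ and $S\ll_\delta X\setminus A$, i.e.\ $S\not\delta A$. Write $S=S_1\cup\cdots\cup S_k$ with $S_i\in\mathscr{B}\subseteq\mbox{RCL}(X)$; since a finite union of regular closed sets is regular closed, $\Int(S)$ is regular open and each $X\setminus S_i$ is regular open with complement in $\mathscr{B}$, so the forthcoming subbasic elements are of legitimate form.

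I would then set
\[
\mathcal{U}_B \;:=\; (\Int S)^{\doublewedge}, \qquad
\mathcal{U}_A \;:=\; \bigcap_{i=1}^{k}(X\setminus S_i)^{++},
\]
both open in $\tau^\doublewedge_\mathscr{B}$. Membership of $B$ in $\mathcal{U}_B$ follows from $x\in B\cap\Int S$ together with Lemma~\ref{hitregular}; membership of $A$ in $\mathcal{U}_A$ follows by expanding $S\not\delta A$ via axiom $P3$ into $S_i\not\delta A$ for every $i$. For disjointness, any $E\in\mathcal{U}_A\cap\mathcal{U}_B$ would satisfy $E\sn\Int S$, hence $E\cap\Int S\neq\emptyset$ by Lemma~\ref{hitregular}, forcing $E\cap S\neq\emptyset$ and thus $E\,\delta\,S$ by $P2$; but $E\not\delta S_i$ for all $i$ reassembles via $P3$ into $E\not\delta S$, a contradiction.

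The principal obstacle is the passage from $\Sigma(\mathscr{B})$ back to $\mathscr{B}$: the regular local family hypothesis only furnishes a finite union $S=\bigcup_i S_i$, yet the subbase of $\tau^\doublewedge_\mathscr{B}$ indexes its miss-sets by individual members of $\mathscr{B}$. This is overcome on both sides by axiom $P3$, which lets one trade the single statement $A\not\delta\bigcup_iS_i$ for the conjunction of $A\not\delta S_i$ (splitting $\mathcal{U}_A$ into a finite intersection of subbasic miss-sets), and, conversely, reassemble $E\not\delta S_i$ for all $i$ into $E\not\delta S$ when checking that $\mathcal{U}_A$ and $\mathcal{U}_B$ are disjoint. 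The use of regular closedness to ensure $\Int(S)$ is regular open is essential so that Lemma~\ref{hitregular} applies and triggers the contradiction via $P2$.
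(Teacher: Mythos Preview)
Your argument is correct and follows exactly the route the paper indicates: it adapts the ``$\Leftarrow$'' direction of Theorem~\ref{T2far} and invokes Lemma~\ref{hitregular} to pass between $V^{\doublewedge}$ and ordinary hit sets. In fact you are more careful than the paper's own (sketch) proof in one respect: you explicitly decompose $S\in\Sigma(\mathscr{B})$ as $S_1\cup\cdots\cup S_k$ and use $P3$ to split and reassemble the miss condition, whereas the corresponding passage in Theorem~\ref{T2far} writes $S\in\mathscr{B}$ without comment. One harmless redundancy: the detour through $x\in\Int B$ is not needed, since already $b\in B\cap(X\setminus A)$ and the regular local family gives $b\in\Int S$, whence $B\cap\Int S\neq\emptyset$ directly.
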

\begin{proof}
The result simply follows by applying mostly the same procedure of thm. \ref{T2far} and by lemma \ref{hitregular}.
\end{proof}

\begin{remark}
Observe that in particular we could refer these results to a generalization of the Fell topology if we take as family $\mathscr{B}$ that one of compact subsets of a topological space.
\end{remark}

\begin{example} {\bf Regular local family}.\\
An easy example of regular local family contained in $\mbox{RCL}(X)$ is obtained by considering $X = \mathbb{R}^N$ in which the family of closed convex subsets is a local family, {\em i.e.},
\[
\forall x \in X \hbox{ and } \forall U \hbox{nbhd of }x,  \exists C \hbox{ convex }: x \in \Int(C) \subseteq C \subset U. 
\]
See, {\em e.g.}, Fig. 3.3 in the 3D spherical nbhd $C$ of $p$ is entirely in the interior of the nbhd $U$ of $p$.  An important thing to notice is that in this case each open convex set is also regular open.
\qquad \textcolor{blue}{$\blacksquare$}
\end{example}

\end{document}